\theoremstyle{plain}
\newtheorem{thm}{\bfseries Theorem}[section]
\newtheorem{lem}[thm]{\bfseries Lemma}
\newtheorem{cor}[thm]{\bfseries Corollary}
\date{}
\title{Morphisms Between Classical and Infinitesimal Polylogarithmic and Grassmannian Complexes}
\author{Raziuddin Siddiqui \thanks{email: rdsiddiqui@fuuast.edu.pk}\\ \small \textit{Mathematical Sciences Research Centre}\\ 
\small \textit{Federal Urdu University, Karachi.}}
\begin{document}
\maketitle
\begin{abstract}
In this paper we want to introduce two commutative diagrams for weight $n$=2 and $n$=3 with six faces on each. These diagrams describe the relations between Grassmannian complex in geometric configurations, Bloch-Suslin's complex for weight $n$=2 and Goncharov's complex for weight $n$=3 and the variants of Cathelineau's complexes for weight $n$=2,3. Here we are putting all complexes together to see a bigger picture.
\end{abstract}
{\bf Keywords:} Grassmannain complex, polylogarithmic groups, motivic complexes
\section{Introduction:}
\paragraph{}
Grassmannian complex with geometric configurations is first defined by A. A. Suslin (\cite{Sus1}) and morphisms between this and Bloch-Suslin complex are introduced by A. B. Goncharov (\cite{Gonc},\cite{Gonc1}). He also introduces the morphisms of latter complexes with motivic complexes (the groups involve in this complex are classical polylogarithmic groups) for generic and degenerate configurations (see \cite{Gonc} and \cite{Gonc1}). On the other hand Cathelineau describes the infinitesimal version of motivic complexes and $F$-vector spaces which involve the groups of infinitesimal polylogarithms(see \cite{Cath1})

We suggest variants of Cathelineau's complex and describe their relation through morphisms with Grassmannian complex in configurations (see \cite{GandS}). In this article we are using Lemma \ref{v-gangl}, and we see that its exploitation will give some useful relations between the three complexes discussed above.

\section{Preliminaries and Background:} 
\paragraph{}
Throughout the note $F$ means field with characteristic zero and $F^{\bullet\bullet}=F-\{0,1\}$. In this section we will cover some primary concepts which are essential for this paper.
\subsection{Grassmannian Complex:}\label{Grass_comp}
\paragraph{}

For a set $X$, let $(\tilde{x}_1,\ldots,\tilde{x}_n)\in X^n$ for all $\tilde{x}_i\in X$, take the modulo group action of $G$ on $X^n$ then we will have $(x_1,\ldots,x_n)\in X^n$ for all $x_i\in X$ and it is called as the configuration of $n$-points in $X$. We replace $X$ by $V_n$ where $V_n$ is an $n$-dimensional vector space over the field $F$. We take the group action of $GL_n(F)$ on $V_n$ then $(x_1,\ldots,x_m)$ represents the configuration of $m$-vectors in $n$-dimensional vector space $V_n$. 

Introducing the free abelian group $C_{m,n}$ generated by the configuration of $m$-vectors $(x_1,\ldots,x_m)$ in $n$-dimensional vector space. Further define a differential map $d$ as
$$d:C_{m,n}\rightarrow C_{m-1,n}$$
by 
$$d:(x_1,\ldots,x_m)\mapsto\sum_{i=1}^m(-1)^i(x_1,\ldots,\hat{x_i},\ldots,x_m)$$
thus we define a complex $(C_{*,n},d)$ called Grassmannian complex with the degree of $d$ is -1.(see \cite{GandS},\cite{Gonc} and \cite{Gonc1})
\subsection{Polylogathmic Groups(Classical):}\label{poly_class}
\paragraph{} Define $\mathcal{B}_1(F):= F^\times$
\subsubsection{Weight 2:}
\paragraph{} First define $R_2(F)\subset \varmathbb{Z}[F]$ generated by  $$\sum_{i=0}^4(-1)^i\left[r(x_0,\ldots\hat{x}_i\ldots,x_4)\right]$$
where $r(x_0,\ldots,x_3)=\frac{\Delta(x_0,x_3)
\Delta(x_1,x_2)}{\Delta(x_0,x_2)\Delta(x_1,x_3)}$ is the cross-ratio of 4-points and $x_0,\ldots,x_3\in F$ and $\Delta(x_i,x_j)$ is the $ 2\times 2$ determinant.

Now introducing $\mathcal{B}_2(F)$, which is defined as the quotient of $\varmathbb{Z}[F]$ by $R_2(F)$ that is
$$\mathcal{B}_2(F):=\frac{\varmathbb{Z}[F]}{R_2(F)}$$
This group can be put in the famous Bloch-Suslin complex $B_F(2):\mathcal{B}_2(F)\xrightarrow{\delta} \wedge^2 F^\times$
where $\wedge^2F^\times=F^\times\otimes F^\times/\langle x\otimes x|x\in F^\times\rangle$ and $\delta([a]_2)=(1-a)\wedge a$
\subsubsection{Weight 3:}
\paragraph{}
First define the map $r_3:C_{6,3}\rightarrow \varmathbb{Z}[F]$ by 
$$r_3(x_0,\ldots,x_5)=\text{Alt}_6\left[
\frac{\Delta(x_0,x_1,x_3)\Delta(x_1,x_2,x_4)
\Delta(x_2,x_0,x_5)}{\Delta(x_0,x_1,x_4)
\Delta(x_1,x_2,x_5)\Delta(x_2,x_0,x_3)}
\right]$$
$R_3(F)\subset \varmathbb{Z}[F]$ is generated by seven-term relation $\sum_{i=0}^6(-1)^ir_3(x_0,\ldots,\hat{x}_i,\ldots,x_6)$ which is a big result and was very helpful to prove Zagier's conjecture for weight 3 (see \cite{Gonc1}).
Thus Goncharov define the weight 3 motivic group $\mathcal{B}_3(F)$ is the quotient group of $\varmathbb{Z}[F]$ by $R_3(F)$, that is
$$\mathcal{B}_3(F)=\frac{\varmathbb{Z}[F]}{R_3(F)}$$
These groups can be placed in the sequence to form the Goncharov's complex of weight 3.
$$B_F(3):\mathcal{B}_3(F)\xrightarrow{\delta}
\mathcal{B}_2(F)\otimes F^\times\xrightarrow{\delta}\wedge^3F^
\times$$
where $\delta([a]_3)=[a]_2\otimes a$ and $\delta([a]_2\otimes b)=(1-a)\wedge a\wedge b$
\subsection{Polylogarithmic Groups(Infinitesimal):}\label{poly_inf}
\paragraph{}
In fact here we will discuss $F$-vector spaces, but before that it is necessary to define general derivation in $F$-vector spaces. Let $D$ be the derivation map $D\in Der_{\varmathbb{Z}}(F,F)$ which means if $a\in F$ then $D(a)\in F$ and satisfies the following conditions.
\begin{enumerate}
\item $D(a+b)=D(a)+D(b)$ 
\item $D(ab)=aD(b)+bD(a)$ 
\end{enumerate}
for all $a, b, D(a), D(b)\in F$

Cathelineau (\cite{Cath1},\cite{Cath2}) has defined the infinitesimal versions of Goncharov's groups $\mathcal{B}_n(F)$

\subsubsection{Weight 2:}
\paragraph{}
Let $\langle a\rangle$ generates the vector space $F[F]$ over the field $F$ and we set $\llbracket a \rrbracket^D :=\frac{D(a)}{a(1-a)}\langle a\rangle$, we define another quotient space $\beta^D_2(F)$ of $F[F]$ by $\rho_2^D(F)$, where $\rho_2^D(F)\subset F[F]$ generated by the following five term relation.
$$\llbracket a \rrbracket^D-\llbracket b \rrbracket^D+\left\llbracket \frac{b}{a} \right\rrbracket^D-\left\llbracket \frac{1-b}{1-a} \right\rrbracket^D+\left\llbracket \frac{1-b^{-1}}{1-a^{-1}} \right\rrbracket^D$$
This vector space $\beta^D_2(F)$ is the variant of Cathelineau's vector $\beta_2(F)$ (see \cite{GandS}, \cite{Cath1}, \cite{Cath2}). We construct a complex 
$$\beta^D_2(F)\xrightarrow{\partial^D} F\otimes F^\times$$
where $\partial^D(\llbracket a \rrbracket^D_2)=-D\log (1-a)\otimes a+D\log a\otimes (1-a)$ and $D\log (a)=\frac{D(a)}{a}$
\subsubsection{Weight 3:}
 $F$-vector space is defined as the quotient space of $F[F]$ by $\rho^D_3(F)$, that is 
$$\beta^D_3(F)=\frac{F[F]}{\rho^D_3(F)}$$
where $\rho^D_3(F)$ is the kernel of the following map
$$\partial^D:F[F]\rightarrow \beta^D_2(F)\otimes F^\times\oplus F\otimes\mathcal{B}_2(F)$$
$$\partial^D(\llbracket a \rrbracket^D_3)=\llbracket a \rrbracket^D_2\otimes a+(1-a)\otimes [a]_2$$
we can make a complex by using the above vector spaces.
$$\beta^D_3(F)\xrightarrow{\partial^D}
\beta^D_2(F)\otimes F^\times\oplus F\otimes\mathcal{B}_2(F)\xrightarrow{
\partial^D} F\otimes \wedge^2F^\times$$
\begin{lem}\label{v-gangl}(Elbaz-Vincent--Gangl)(see Lemma 6.1 and Proposition 6.2 of \cite{PandG})
Let $D\in Der_{\varmathbb{Z}}(F)$ be an absolute derivation for the field $F$. Then the following diagram is commutative.
\begin{displaymath}
\xymatrix{
\varmathbb{Z}[F^{\bullet\bullet}]\ar[rr]^{f_D}\ar[d]^
{\delta_n}&&F[F^{\bullet\bullet}]\ar[d]^
{\partial_n}\\
\mathcal{B}_{n-1}(F)\otimes F^\times\ar[rr]^{g^n_D\qquad\qquad}&&\left
(\beta_{n-1}^D(F)\otimes F^\times\right) \oplus \left(F\otimes\mathcal{B}_{n-1}(F)\right)}\\
\end{displaymath}
Where $f_D:[a]\mapsto\frac{D(a)}{a(1-a)}[a]$ 
\begin{equation*}
\delta_n:[a]\mapsto
 \begin{cases}
  [a]_{n-1}\otimes a \quad &\text{ for } n>2\\
  (1-a)\wedge a \quad &\text{ for } n=2\\
 \end{cases}
\end{equation*}
\begin{equation*}
\partial_n:[a]\mapsto
 \begin{cases}
  \llbracket a \rrbracket^D_{n-1}\otimes a + (1-a)\otimes [a]_{n-1}\quad &\text{ for } n>2\\
  -\frac{D(a)}{1-a}\otimes a+\frac{D(a)}{a}\otimes (1-a)\quad &\text{ for } n=2\\
 \end{cases}
\end{equation*}
\[g^n_D:[a]_{n-1}\otimes b\mapsto\llbracket a\rrbracket^D_{n-1}\otimes b+\frac{D(b)}{b}\otimes [a]_{n-1}\]
\end{lem}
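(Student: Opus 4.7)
The plan is a direct chase on a single generator $[a]\in\mathbb{Z}[F^{\bullet\bullet}]$. Because $f_D$, $\delta_n$, $\partial_n$, and $g^n_D$ are all given by explicit elementary formulas, verifying commutativity reduces to checking that the images of $[a]$ under the two compositions $\partial_n\circ f_D$ and $g^n_D\circ\delta_n$ coincide in the appropriate codomain, term by term.

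For $n=2$ I would first compute along the bottom-then-right route: $\delta_2([a])=(1-a)\wedge a$, then apply $g^2_D$, interpreting the wedge inside $\mathcal{B}_1(F)\otimes F^\times = F^\times\otimes F^\times$ and identifying $\llbracket b\rrbracket^D_1$ with the logarithmic derivative $D(b)/b$ for $b\in F^\times$. This produces $-\frac{D(a)}{1-a}\otimes a+\frac{D(a)}{a}\otimes(1-a)$, which is exactly the image of $[a]$ under $\partial_2\circ f_D$ as prescribed in the statement, so the $n=2$ square closes on the nose.

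For $n>2$ the same strategy applies, now with a direct-sum codomain. Along the bottom-then-right route, $\delta_n([a])=[a]_{n-1}\otimes a$ is carried by $g^n_D$ to $\llbracket a\rrbracket^D_{n-1}\otimes a+\frac{D(a)}{a}\otimes[a]_{n-1}$, while the top-then-down route produces $\llbracket a\rrbracket^D_{n-1}\otimes a+(1-a)\otimes[a]_{n-1}$. The first summands in $\beta^D_{n-1}(F)\otimes F^\times$ agree literally, and the second summands in $F\otimes\mathcal{B}_{n-1}(F)$ are reconciled by the scalar identity $\frac{D(a)}{a(1-a)}\cdot(1-a)=\frac{D(a)}{a}$, which is precisely the way in which the derivation factor introduced on the $f_D$-edge of the square is redistributed into the second tensor slot of $\partial_n$.

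The main obstacle is purely notational: one must keep the convention $\llbracket a\rrbracket^D_{n-1}=\frac{D(a)}{a(1-a)}\langle a\rangle_{n-1}$ firmly in mind, so that the bracketed symbol silently absorbs the derivation scalar from $f_D$ in the first summand while the compensating $(1-a)$ factor intrinsic to the definition of the Cathelineau differential $\partial^D$ handles the second summand. Once this bookkeeping is laid out explicitly, the verification collapses to a finite sequence of elementary manipulations using only the Leibniz rule for $D$ and the four defining formulas, and no deeper input is required.
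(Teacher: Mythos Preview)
Your approach is exactly what the paper does: it says only that the proof ``requires direct computation'' and defers to \cite{PandG} for details, so your generator-by-generator verification using the explicit formulas for $f_D$, $\delta_n$, $\partial_n$, and $g^n_D$ is precisely the intended argument. Your identification of the key bookkeeping step---that the scalar $\tfrac{D(a)}{a(1-a)}$ coming from $f_D$ is absorbed into $\llbracket a\rrbracket^D_{n-1}$ in the first summand and collapses via $\tfrac{D(a)}{a(1-a)}\cdot(1-a)=\tfrac{D(a)}{a}$ in the second---is the whole content of the computation.
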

\begin{proof}
Proof requires direct computation(see the proof of Lemma 6.1 and Proposition 6.2 in \cite{PandG}).  
\end{proof}

\section{Morphisms in weight 2 complexes:}

Recall the diagram (2.9) of $\S$2.3 in \cite{Gonc}. Goncharov proved that the following diagram is commutative.
\begin{displaymath}\label{sq_1}
\xymatrix{
C_{4,2}\ar[r]^d\ar[d]^{f^2_1}&C_{3,2}\ar[d]
^{f^2_0}\\
\mathcal{B}_2(F)\ar[r]^{\delta}&\bigwedge^
2F^\times}\tag{3.1a }\\
\end{displaymath}
for the following maps 
\[f^2_0(x_0,x_1,x_2)=\Delta(x_0,x_1)\wedge
\Delta(x_0,x_2)-\Delta(x_0,x_1)\wedge
\Delta(x_1,x_2)+\Delta(x_0,x_2)\wedge
\Delta(x_1,x_2)\]
and
\[f^2_1(x_0\ldots,x_3)=\left[\frac{\Delta
(x_0,x_3)\Delta(x_1,x_2)}{\Delta(x_0,x_2)\Delta(x_1,x_3)}\right]_2\]
where $d$ is defined in $\S$\ref{Grass_comp} and $\delta$ is defined in $\S$\ref{poly_class} above.
\begin{lem}
The diagram (\ref{sq_1}) is commutative, i.e. $f^2_0\circ d=\delta\circ f^2_1$
\end{lem}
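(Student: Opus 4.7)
The plan is to chase a generic configuration $(x_0,x_1,x_2,x_3)\in C_{4,2}$ around both paths of the square and compare the outputs in $\wedge^2 F^\times$. The single non-trivial algebraic input is the three-term Pl\"ucker relation for four vectors in a $2$-dimensional space,
\[
\Delta(x_0,x_1)\,\Delta(x_2,x_3) - \Delta(x_0,x_2)\,\Delta(x_1,x_3) + \Delta(x_0,x_3)\,\Delta(x_1,x_2) = 0;
\]
everything else is multilinearity of $\wedge$ together with $a\wedge a=0$. Throughout I abbreviate $\Delta_{ij}:=\Delta(x_i,x_j)$.

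First I would go around via $\delta\circ f^2_1$. Setting $r=\Delta_{03}\Delta_{12}/(\Delta_{02}\Delta_{13})$, the Pl\"ucker identity rewrites $1-r$ as $\Delta_{01}\Delta_{23}/(\Delta_{02}\Delta_{13})$, so
\[
\delta\circ f^2_1(x_0,\ldots,x_3) = \bigl(\Delta_{01}\Delta_{23}/(\Delta_{02}\Delta_{13})\bigr)\wedge \bigl(\Delta_{03}\Delta_{12}/(\Delta_{02}\Delta_{13})\bigr).
\]
Expanding multilinearly and killing the repeated $\Delta_{02}$ and $\Delta_{13}$ factors via $a\wedge a=0$ leaves a short alternating sum of basic wedges $\Delta_{ij}\wedge\Delta_{k\ell}$ with $\{i,j,k,\ell\}=\{0,1,2,3\}$.

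Next I would go around via $f^2_0\circ d$. The differential $d$ produces a signed sum of the four $3$-tuples $(x_0,\ldots,\hat{x}_i,\ldots,x_3)$, and $f^2_0$ sends each to a sum of three wedges, so the composition is an a priori $12$-term expression in the wedges $\Delta_{ij}\wedge\Delta_{k\ell}$. Grouping by the unordered pair of edges $\{\{i,j\},\{k,\ell\}\}$ and using antisymmetry reduces this to exactly the same list of basic wedges as on the $\delta\circ f^2_1$ side.

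The concluding step is a sign audit: verify that each basic wedge occurs with identical coefficient on both sides. This is the one place where care is needed, and I expect it to be the main obstacle, though it is purely combinatorial rather than conceptual; no identity beyond Pl\"ucker is required. A practical way to organize the bookkeeping is to fix a total order on the six edges of the tetrahedron $\{0,1,2,3\}$ and tally the coefficients of both expressions in the resulting basis of the subgroup of $\wedge^2 F^\times$ generated by the $\Delta_{ij}$.
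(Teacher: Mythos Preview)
Your plan is correct and is precisely the ``direct computation'' the paper alludes to (the paper gives no in-text argument, simply pointing to \cite{Gonc},\cite{Gonc1}): compute $1-r$ via the Pl\"ucker relation, expand both $\delta\circ f^2_1$ and $f^2_0\circ d$ as $\varmathbb{Z}$-linear combinations of the wedges $\Delta_{ij}\wedge\Delta_{k\ell}$, and match coefficients. There is no alternative idea in play, and the sign audit you flag is indeed routine once the terms are tabulated.
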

\begin{proof}
Proof requires direct computations (see \cite{Gonc},\cite{Gonc1} for the proof)
\end{proof}
In \cite{GandS}, we introduced similar diagram which provide the morphisms between Grassmannian and the variant of Cathelineau's complex for weight 2. 
\begin{displaymath}\label{sq_2}
\xymatrix{
C_{4,2}\ar[r]^d\ar[d]^{\tau_1^{2}}& C_{3,2}\ar[d]^{\tau_0^{2}}\\ \beta^D_2(F)\ar[r]^{\partial^D}              & F\otimes F^\times }\tag{3.1b}\
\end{displaymath}
where $\beta^D_2(F)$ and $\partial^D$ are defined in $\S$\ref{poly_inf}, we define 
\begin{align}\label{todef}
\tau_0^{2}\colon(x_0,x_1,x_2)\mapsto &\sum_{i=0}^2 \Bigg\{\frac{D\{\Delta(x_i,x_{i+2})\}}{\Delta(x_i,x_{i+2})}\otimes \Delta(x_i,x_{i+1})\notag\\
&\quad-\frac{D\{\Delta(x_{i+1},x_i)\}}{\Delta(x_{i+1},x_i)}\otimes \Delta(x_i,x_{i+2})\Bigg\} \quad\text{$i$ mod 3}\notag
\end{align}
\begin{equation}\label{t1def}
\tau_1^{2}\colon (x_0,\ldots,x_3)\mapsto \llbracket r(x_0,\ldots,x_3)\rrbracket^D_2\notag
\end{equation}
\begin{lem}
$\tau_0^2\circ d=\partial^D\circ \tau^2_1$
\end{lem}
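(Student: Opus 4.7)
My plan is to verify the identity by direct computation, paralleling Goncharov's proof of (\ref{sq_1}). Both $\tau_0^2\circ d$ and $\partial^D\circ\tau_1^2$ unfold into sums of atomic terms of the form $D\log\bigl(\Delta(x_a,x_b)\bigr)\otimes\Delta(x_c,x_d)\in F\otimes F^\times$, so the task reduces to matching the two expansions once the Pl\"ucker identity
\[\Delta(x_0,x_1)\Delta(x_2,x_3)-\Delta(x_0,x_2)\Delta(x_1,x_3)+\Delta(x_0,x_3)\Delta(x_1,x_2)=0\]
valid in a $2$-dimensional vector space has been invoked. This identity supplies the crucial factorization $1-r=\Delta(x_0,x_1)\Delta(x_2,x_3)/(\Delta(x_0,x_2)\Delta(x_1,x_3))$ for $r=r(x_0,\ldots,x_3)$, without which the two sides do not align.

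First I would evaluate $\partial^D\circ\tau_1^2(x_0,\ldots,x_3)=-D\log(1-r)\otimes r+D\log(r)\otimes(1-r)$: expanding via $D\log(ab)=D\log(a)+D\log(b)$ and $\mathbb{Z}$-bilinearity of $\otimes$ in both slots produces a signed sum of sixteen atomic terms. Next I would apply $d$ to $(x_0,\ldots,x_3)$, obtaining four $3$-configurations with alternating signs, and evaluate $\tau_0^2$ (six summands each) on each, producing twenty-four atomic terms. Using antisymmetry $\Delta(x_j,x_i)=-\Delta(x_i,x_j)$ in $F^\times$ and the consequence $D\log\Delta(x_j,x_i)=D\log\Delta(x_i,x_j)$ of $D(-1)=0$, every bracket is normalized into a canonical order; the residual $\otimes(-1)$ contributions then must be checked to cancel globally.

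The main obstacle is the sign-bookkeeping: after normalization, the twenty-four left-hand terms must collapse pairwise and rearrange into exactly the sixteen right-hand terms. A cleaner, more conceptual alternative is to verify the factorizations $\tau_1^2=f_D\circ f_1^2$ and $\tau_0^2=g_D^2\circ f_0^2$, where $g_D^2$ denotes the map induced on $\wedge^2 F^\times$ by the prescription of Lemma \ref{v-gangl} with $n=2$. Granted these identifications, (\ref{sq_2}) follows by pasting Goncharov's commutative square (\ref{sq_1}) on top of the Elbaz-Vincent--Gangl square of Lemma \ref{v-gangl}, reducing the proof to the commutativity of a diagram already established. Either route terminates in the same routine verification, differing only in how the sign accounting is packaged.
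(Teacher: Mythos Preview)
Your proposal is sound. The paper itself gives no proof here, deferring to \cite{GandS}; so strictly speaking there is nothing to compare against in this document. Your direct-computation route is the expected approach and the ingredients you list (the Pl\"ucker relation for $1-r$, bilinearity, and the sign normalization $\Delta(x_j,x_i)=-\Delta(x_i,x_j)$) are exactly what is needed. A minor quibble: your atomic-term count on the $\partial^D\circ\tau_1^2$ side is off---after invoking Pl\"ucker and fully expanding both tensor slots one gets more than sixteen raw terms before cancellation---but this does not affect the argument.

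Your ``cleaner alternative'' is worth highlighting because it is precisely the content of the Corollary that the paper proves immediately after this Lemma: there the author verifies the triangular faces $\tau_0^2=g^1_{2,D}\circ f_0^2$ and $\tau_1^2=\tau_D^2\circ f_1^2$ by a two-line computation. The paper's logical order is Lemma first (via \cite{GandS}), then Corollary; your observation is that one can reverse this and obtain the Lemma for free by pasting the Goncharov square (\ref{sq_1}) with the $n=2$ case of Lemma~\ref{v-gangl} along those triangular faces. That is a genuine economy: it replaces a 24-versus-many term match with two trivial identifications plus two already-proved squares. One small correction of notation: $f_1^2$ lands in $\mathcal{B}_2(F)$, not $\varmathbb{Z}[F^{\bullet\bullet}]$, so the map you want to postcompose with is the induced map $\tau_D^2\colon[x]_2\mapsto\llbracket x\rrbracket_2^D$ rather than $f_D$ itself; likewise your $g_D^2$ is what the paper calls $g^1_{2,D}$.
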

\begin{proof}
See \cite{GandS} for the proof.
\end{proof}
Lemma \ref{v-gangl} and diagrams (\ref{sq_1}) and (\ref{sq_2}) will give us the following prism like diagram which has six faces and the next result in this section will show that all faces of the following diagram are commutative.

\begin{displaymath}\label{tri2D}
 \xymatrix{
& C_{4,2}\ar[rr]^d\ar[dd]\ar[ld]_{f_1^2}  &&C_{3,2}\ar[dd]^{\tau_{0}^{2}}\ar[dl]_
{f_0^2}\\
\mathcal{B}_2(F)\ar[rr]\ar[dr]^{\tau^2_D} &\ar[d]^{\tau_{1}^{2}} &\bigwedge^2F^\times\ar[dr]^{g^1_{2,D}}&\\
& \beta^D_2(F)\ar[rr]^{\partial} && F\otimes F^\times }\tag{3.2b} \\
\end{displaymath}
where (see $\S$6.1 in \cite{PandG})
\[\tau^2_D([x]_2)=\llbracket x\rrbracket_2\quad\text{and}\quad g^1_{2,D}(x\wedge y)=\frac{D(x)}{x}\otimes y-\frac{D(y)}{y}\otimes x\]
\begin{cor}
The diagram \eqref{tri2D} above is commutative, i.e. there is a morphism of complexes between all three complexes used in diagram \eqref{tri2D}.
\end{cor}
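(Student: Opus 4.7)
The plan is to verify commutativity of each face of this prism-shaped diagram separately; most of them are already at our disposal, and the remaining ones reduce to short direct checks.

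Three of the faces are rectangles whose commutativity is already established. The back rectangle with vertices $C_{4,2},\,C_{3,2},\,\bigwedge^2 F^\times,\,\mathcal{B}_2(F)$ is precisely the diagram \eqref{sq_1}, commutative by Goncharov. The front rectangle with vertices $C_{4,2},\,C_{3,2},\,F\otimes F^\times,\,\beta^D_2(F)$ is precisely the diagram \eqref{sq_2}, commutative by the earlier result taken from \cite{GandS}. The bottom rectangle with vertices $\mathcal{B}_2(F),\,\bigwedge^2 F^\times,\,F\otimes F^\times,\,\beta^D_2(F)$ is the $n=2$ case of Lemma \ref{v-gangl}: after descending from the free generators $\varmathbb{Z}[F^{\bullet\bullet}]$ and $F[F^{\bullet\bullet}]$ to their quotients $\mathcal{B}_2(F)$ and $\beta^D_2(F)$, the identity $\partial_n\circ f_D=g^n_D\circ\delta_n$ specialises to $\partial^D\circ\tau^2_D=g^1_{2,D}\circ\delta$.

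What remains are the two triangles on the ends. The left triangle $C_{4,2}\to\mathcal{B}_2(F)\to\beta^D_2(F)$ versus $C_{4,2}\to\beta^D_2(F)$ asks for $\tau^2_D\circ f_1^2=\tau_1^2$. Substituting the definitions gives
$$\tau^2_D\bigl(f_1^2(x_0,\ldots,x_3)\bigr)=\tau^2_D\bigl([r(x_0,\ldots,x_3)]_2\bigr)=\llbracket r(x_0,\ldots,x_3)\rrbracket^D_2=\tau_1^2(x_0,\ldots,x_3),$$
so this face is immediate from the very definitions of $\tau^2_D$, $f_1^2$ and $\tau_1^2$.

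The right triangle $C_{3,2}\to\bigwedge^2 F^\times\to F\otimes F^\times$ versus $C_{3,2}\to F\otimes F^\times$ asks for $g^1_{2,D}\circ f_0^2=\tau_0^2$. I would expand both sides: applying $g^1_{2,D}$ to each of the three wedge summands of $f_0^2(x_0,x_1,x_2)$ produces six tensors of the form $\tfrac{D(\Delta(x_i,x_j))}{\Delta(x_i,x_j)}\otimes \Delta(x_k,x_\ell)$, while unfolding the cyclic sum defining $\tau_0^2$ and simplifying via the antisymmetry $\Delta(x_i,x_j)=-\Delta(x_j,x_i)$, the identity $D(-u)/(-u)=D(u)/u$, and the vanishing $a\otimes(-1)=0$ in $F\otimes F^\times$ (valid in characteristic zero, since then $F\otimes F^\times$ is a $\varmathbb{Q}$-vector space and hence $2$-torsion-free) produces the same six terms and they match one by one. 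This last step is the only face requiring genuine computation, but it is purely mechanical bookkeeping; the main risk is a slip in the mod-$3$ cyclic indices or in the $\Delta$-sign flips, not a conceptual obstacle.
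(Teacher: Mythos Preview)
Your proposal is correct and follows essentially the same strategy as the paper: the three rectangular faces are handled by the prior results (diagrams \eqref{sq_1}, \eqref{sq_2}, and Lemma \ref{v-gangl}), and the two triangular end faces $\tau^2_D\circ f_1^2=\tau_1^2$ and $g^1_{2,D}\circ f_0^2=\tau_0^2$ are checked by direct substitution and term-by-term matching. If anything, your discussion of the sign issues (antisymmetry of $\Delta$, the identity $D(-u)/(-u)=D(u)/u$, and the vanishing of $a\otimes(-1)$ in $F\otimes F^\times$) makes the right-triangle verification more explicit than the paper's own write-up.
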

\begin{proof}
We only need to show that $g^1_{2,D}\circ f^2_0(x_0,x_1,x_3)=\tau^2_0(x_0,x_1,x_3)$ and $\tau^2_D\circ f^2_1(x_0,\ldots,x_3)=\tau^2_1(x_0,\ldots,x_3)$.
\begin{align*}
g^1_{2,D}\circ f^2_0(x_0,x_1,x_3)=&g^1_{2,D}
\left(\Delta(x_0,x_1)\wedge\Delta(x_0,x_2)-
\Delta(x_0,x_1)\wedge\Delta(x_1,x_2)+
\Delta(x_0,x_2)\wedge\Delta(x_1,x_2)\right)
\\
=&\frac{D\left(\Delta(x_0,x_1)\right)}{\Delta(x_0,x_1)}\otimes \Delta(x_0,x_2)-\frac{D
\left(\Delta(x_0,x_2)\right)}{\Delta(x_0,x_2)}\otimes \Delta(x_0,x_1)\\
-&\frac{D\left(\Delta(x_0,x_1)\right)}{\Delta(x_0,x_1)}\otimes \Delta(x_1,x_2)+\frac{D
\left(\Delta(x_1,x_2)\right)}{\Delta(x_1,x_2)}\otimes \Delta(x_0,_1)\\
+&\frac{D\left(\Delta(x_0,x_2)\right)}{\Delta(x_0,x_2)}\otimes \Delta(x_1,x_2)-\frac{D
\left(\Delta(x_1,x_2)\right)}{\Delta(x_1,x_2)}\otimes \Delta(x_0,x_2)\\
=&\tau^2_0(x_0,x_1,x_3)
\end{align*}
and
\begin{align*}
\tau^2_D\circ f^2_1(x_0,\ldots,x_3)=&\tau^2_D\left(\left[\frac{
\Delta(x_0,x_3)
\Delta(x_1,x_2)}{\Delta(x_0,x_2)\Delta(x_1,x_3)}\right]_2
\right)=\left\llbracket\frac{\Delta(x_0,x_3)
\Delta(x_1,x_2)}{\Delta(x_0,x_2)\Delta(x_1,x_3)}
\right\rrbracket_2^D
\end{align*}
\end{proof}

\section{Morphisms in Weight 3 complexes:}
\paragraph{}
Similar to weight $n$=2, Goncharov constructed morphisms between Grassmannian and weight $n$=3 motivic complexes. Diagram (3.2) in $\S$3 of \cite{Gonc1} describes about the morphisms.
\begin{displaymath}\label{Gonc2d}
\xymatrix{
C_{6,3}\ar[r]^{d}\ar[d]^{f_2^3}     & C_{5,3}\ar[r]^{d}\ar[d]^{f_1^{3}}        &C_{4,3}\ar[d]^{f_0^{3}}\\
\mathcal{B}_3(F)\ar[r]^{\delta\quad}     & \mathcal{B}_2(F)\otimes F^\times \ar[r]^{\quad\delta}          & \bigwedge^3 F^\times}\tag{4.1a}\ 
\end{displaymath}
is commutative for the following maps
\[f^3_0(x_0,\ldots,x_3)=\sum_{i=0}^3(-1)^i
\bigwedge_{\substack{j=0\\j\neq i}}^3\Delta(x_0,\ldots,\hat{x}_j,\ldots,x_3),\]
\[f^3_1(x_0,\ldots,x_4)=-\frac{1}{3}\sum_{i=0}^4(-1)^i\left[r(x_i|x_0,\ldots,\hat{x}_i,\ldots,x_4)\right]_2\otimes \prod_{j\neq i}\Delta(\hat{x}_i,\hat{x}_j)\]
and $f^3_2$ is defined via alternation sum for generic points.
\[f^3_2(x_0,\ldots,x_5)=\frac{2}{45}\text{Alt}_6\left[\frac{\Delta(x_0,x_1,
x_3)\Delta(x_1,x_2,x_4)\Delta(x_2,x_0,x_5)}{\Delta(x_0,x_1,x_4)\Delta(x_1,x_2,x_5)
\Delta(x_2,x_0,x_3)}\right]_3 \]
where $\delta\left([x]_3\right)=[x]_2\otimes x$ for all $x\neq0,1\in F^\times$ and $\delta([x]_2)=(1-x)\wedge x$.
\begin{thm}
Diagram (\ref{Gonc2d}) is commutative, i.e.
\begin{enumerate}
\item $f^3_0\circ d=\delta\circ f^3_1$  \item $f^3_1\circ d=\delta\circ f^3_2$
\end{enumerate}
\end{thm}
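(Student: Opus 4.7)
The plan is to verify the two identities by direct computation, following Goncharov's original argument in \cite{Gonc1} and exploiting the Pl\"ucker-type relations among $3\times 3$ minors together with the defining five- and seven-term relations of $\mathcal{B}_2(F)$ and $\mathcal{B}_3(F)$.

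For the first square $f^3_0\circ d=\delta\circ f^3_1$, I would fix $(x_0,\ldots,x_4)\in C_{5,3}$ and expand both sides. The left-hand side is $\sum_{i=0}^4 (-1)^i f^3_0(x_0,\ldots,\hat{x}_i,\ldots,x_4)$, a signed sum of wedges of three $3\times 3$ minors. The right-hand side evaluates to $-\tfrac{1}{3}\sum_{i=0}^4 (-1)^i\,(1-r_i)\wedge r_i\wedge\Bigl(\prod_{j\neq i}\Delta(\hat{x}_i,\hat{x}_j)\Bigr)$, where $r_i=r(x_i|x_0,\ldots,\hat{x}_i,\ldots,x_4)$. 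The crucial step is the projection identity that expresses the cross-ratio of the four projected points in $V_3/\langle x_i\rangle$ as a ratio of four determinants $\Delta(x_i,x_j,x_k)$ in $V_3$. Substituting this into $(1-r_i)\wedge r_i$ and expanding via multilinearity of $\wedge$ together with $a\wedge a=0$ should yield a term-by-term match.

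For the second square $f^3_1\circ d=\delta\circ f^3_2$, I would fix $(x_0,\ldots,x_5)\in C_{6,3}$. The right-hand side becomes $\tfrac{2}{45}\,\mathrm{Alt}_6\bigl([R]_2\otimes R\bigr)$, where $R$ is the triple ratio from the definition of $f^3_2$. The left-hand side decomposes via $d$ into six 5-tuples, each contributing, under $f^3_1$, five cross-ratio classes in $\mathcal{B}_2(F)$ tensored with products of $3\times 3$ minors --- thirty summands in total. The matching is obtained by applying the five-term relation inside $\mathcal{B}_2(F)$ to each cross-ratio class so that, after the alternation sum, they all collapse to multiples of $[R]_2$, while the $F^\times$-factors recombine via Pl\"ucker identities and bilinearity of $\otimes$ into $R$ itself. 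The coefficients $-\tfrac{1}{3}$ and $\tfrac{2}{45}$ are fine-tuned precisely so that the combinatorics balance.

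The main obstacle will be the second square: the interplay between the signs from $d$, the alternation over $S_6$ inside $f^3_2$, and the many simultaneous five-term relations in $\mathcal{B}_2(F)$ requires delicate bookkeeping. This is precisely the computation carried out in \cite{Gonc1}, and in the write-up I would appeal to that reference for the combinatorial details rather than reproduce them in full.
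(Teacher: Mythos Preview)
Your approach is essentially the same as the paper's: the paper gives no computation at all and simply refers to Goncharov's original arguments, which is exactly what you propose to do after your sketch. The only correction is bibliographic: for part~(2) the paper points not to \cite{Gonc1} but to the appendix of \cite{Gonc4}, where the identity $f^3_1\circ d=\delta\circ f^3_2$ is actually established, so you should adjust your citation accordingly.
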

\begin{proof}
See \cite{Gonc1} for (1) and appendix of \cite{Gonc4} for the proof of (2). 
\end{proof}

In \cite{GandS}, we introduced another diagram which gives the morphisms between Grassmannian and the variant of cathelineau's complex for weight $n$=3.
\begin{displaymath}\label{bicomp2}
\xymatrix{
C_{6,3}\ar[r]^{d}\ar[d]^{\tau_2^3}     & C_{5,3}\ar[r]^{d}\ar[d]^{\tau_1^{3}}        &C_{4,3}\ar[d]^{\tau_0^{3}}\\
\beta^D_3(F)\ar[r]^{\partial\qquad\qquad}     & (\beta^D_2(F)\otimes F^\times) \oplus  (F\otimes \mathcal{B}_2(F))\ar[r]^{\qquad\qquad\partial}          & F\otimes \bigwedge^2 F^\times}\ \tag{4.2a}
\end{displaymath}
where 
\begin{align} \label{t03def}
\tau_0^{3}:(x_0,\ldots,x_3)\mapsto\sum_{i=0}^3(-1)^{i}&
\frac{D\Delta(x_0,\ldots,\hat{x_i},\ldots,x_3)}{\Delta(x_0,\ldots,\hat{x}_i,\ldots,x_3)}\otimes \frac{\Delta(x_0,\ldots,\hat{x}_{i+1},\ldots,x_3)}{\Delta(x_0,\ldots,\hat{x}_{i+2},\ldots,x_3)}\notag\\ &\wedge\frac{\Delta(x_0,\ldots,\hat{x}_{i+3},\ldots,x_3)}{\Delta(x_0,\ldots,\hat{x}_{i+2},\ldots,x_3)} 
\end{align}
\begin{align}
\tau_1^{3}:(x_0,\ldots,x_4)\mapsto -\frac{1}{3}\sum_{i=0}^4 &(-1)^i\Big\{
\llbracket r(x_i|l_0,\ldots,\hat{x}_i,\ldots,x_4) \rrbracket^D_2 \otimes \prod_{j\neq i}\Delta(\hat{x}_i,\hat{x}_j) \notag \\
&+\frac{D\left(\prod_{j\neq i}\Delta(\hat{x}_i,\hat{x}_j)\right)}{\prod_{j\neq i}\Delta(\hat{x}_i,\hat{x}_j)}\otimes [r(x_i|x_0,\ldots,\hat{x}_i,\ldots,x_4)]_2\Big\} \notag
\end{align}
\begin{align}
&\tau_2^3:(x_0,\ldots,x_5)\mapsto\frac{2}{45}\text{Alt}_6\left\llbracket \frac{\Delta(x_0,x_1,x_3)\Delta(x_1,x_2,x_4)\Delta(x_2,x_0,x_5)}{\Delta(x_0,x_1,x_4)\Delta(x_1,x_2,x_5)\Delta(x_2,x_0,x_3)}\right\rrbracket^D_3\notag
\end{align}
where \[\llbracket a\rrbracket^D_3 = \frac{D(a)}{a(1-a)}\langle a\rangle_3\text{ and } \Delta(\hat{x}_i,\hat{x}_j)=\Delta(x_0,\ldots,\hat{x}_i,\ldots,\hat{x}_j,\ldots,x_4)\]
\[\partial^D\left(\left\llbracket a\right\rrbracket^D_3\right) =\left\llbracket a \right\rrbracket^D_2\otimes a +\frac{D(a)}{a}\otimes [a]_2\]
\[\partial^D\left(\llbracket a\rrbracket^D_2\otimes b + x\otimes [y]_2\right)=\frac{D(1-a)}{1-a}\otimes a\wedge b-\frac{D(a)}{a}\otimes (1-a)\wedge b+x\otimes (1-y)\wedge y\]
\begin{thm}
Diagram (\ref{bicomp2}) is commutative, i.e.,
\begin{enumerate}
\item $\tau^3_0\circ d=\partial^D\circ\tau^3_1$
\item $\tau^3_1\circ d=\partial^D\circ\tau^3_2$
\end{enumerate}
\end{thm}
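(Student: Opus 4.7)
I plan to reduce the commutativity of the two squares in diagram (\ref{bicomp2}) to the combination of two already-established commutative diagrams: Goncharov's classical weight-three diagram (\ref{Gonc2d}), and the Elbaz-Vincent--Gangl square of Lemma \ref{v-gangl}. In exact analogy with the weight-two corollary already proved, I will exhibit each vertical map $\tau_i^3$ as a composition of the corresponding Goncharov map $f_i^3$ with a ``classical-to-infinitesimal'' lift built from the derivation $D$. Once these factorizations are in place, (1) and (2) each follow by a short diagram chase through a prism whose top face is (\ref{Gonc2d}) and whose bottom face is (\ref{bicomp2}).

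\textbf{Step 1: factoring the $\tau_i^3$.} I would first verify the three factorizations
\[\tau_2^3 = \tau_D^3 \circ f_2^3,\qquad \tau_1^3 = g_D^3 \circ f_1^3, \qquad \tau_0^3 = g_{3,D}^1 \circ f_0^3,\]
where $\tau_D^3\colon [a]_3 \mapsto \llbracket a\rrbracket_3^D$, $g_D^3$ is the map of Lemma \ref{v-gangl} at $n=3$, and $g_{3,D}^1\colon \wedge^3 F^\times \to F\otimes \wedge^2 F^\times$ is the natural Leibniz extension of $g_{2,D}^1$, defined on pure wedges by
\[a\wedge b\wedge c \;\mapsto\; \tfrac{D(a)}{a}\otimes b\wedge c \;-\; \tfrac{D(b)}{b}\otimes a\wedge c \;+\; \tfrac{D(c)}{c}\otimes a\wedge b.\]
The first factorization is immediate from comparing the two $\text{Alt}_6$ sums. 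The second is obtained by applying $g_D^3$ termwise to $f_1^3$ and using $D\log\prod_j\Delta(\hat{x}_i,\hat{x}_j)=\sum_j D\log\Delta(\hat{x}_i,\hat{x}_j)$ to recognise the second half of $\tau_1^3$. The third comes from a Leibniz-style expansion of the wedge-of-ratios $\tfrac{\Delta(\hat x_{i+1})}{\Delta(\hat x_{i+2})}\wedge\tfrac{\Delta(\hat x_{i+3})}{\Delta(\hat x_{i+2})}$ appearing in $\tau_0^3$, using $x\wedge x=0$ and bilinearity.

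\textbf{Step 2: diagram chase.} Using the factorizations, identity (2) follows immediately from
\[\tau_1^3\circ d \;=\; g_D^3\circ f_1^3\circ d \;=\; g_D^3\circ\delta\circ f_2^3 \;=\; \partial^D\circ\tau_D^3\circ f_2^3 \;=\; \partial^D\circ\tau_2^3,\]
where the middle equalities invoke (\ref{Gonc2d}) and Lemma \ref{v-gangl} at $n=3$. The same chase reduces (1) to the auxiliary identity $g_{3,D}^1\circ\delta \;=\; \partial^D\circ g_D^3$ on $\mathcal{B}_2(F)\otimes F^\times$. Evaluating both sides on a generator $[a]_2\otimes b$ and using $D\log\bigl((1-a)a\bigr) = \tfrac{D(1-a)}{1-a}+\tfrac{D(a)}{a}$ together with the displayed formula for $\partial^D$ on $\beta_2^D(F)\otimes F^\times\oplus F\otimes\mathcal{B}_2(F)$ yields a term-by-term match.

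\textbf{Main obstacle.} The technical heart of the argument lies in Step 1, specifically the factorization $\tau_0^3 = g_{3,D}^1\circ f_0^3$: expanding $g_{3,D}^1$ termwise on the twelve summands of $f_0^3$ and comparing with the cyclic sum of four wedge-of-ratios terms defining $\tau_0^3$ requires repeated application of $x\wedge x=0$, antisymmetry of $\wedge$, and careful re-indexing of the sum over $i\bmod 4$. A secondary concern is bookkeeping the scalar prefactors $-\tfrac{1}{3}$ in $\tau_1^3,f_1^3$ and $\tfrac{2}{45}$ in $\tau_2^3,f_2^3$ through the chase, but because these propagate compatibly along both the Goncharov and the Elbaz-Vincent--Gangl squares, they do not disturb the commutativity.
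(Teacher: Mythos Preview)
Your approach is correct but differs from the paper's. The paper leaves the proof of this theorem empty, deferring to a direct computational verification in \cite{GandS}; the subsequent Corollary then \emph{uses} the theorem together with the factorisations $\tau_i^3=(\cdot)\circ f_i^3$ to conclude that all six faces of the prism (\ref{tri3D}) commute. You have inverted this logic: you first establish the three factorisations (exactly the content of the Corollary's proof), combine them with Theorem~4.1 and Lemma~\ref{v-gangl}, and \emph{deduce} the bottom face of the prism, which is precisely the theorem. This is a legitimate and self-contained derivation that dispenses with the external reference; conversely, it makes the theorem depend on Goncharov's nontrivial result $f_1^3\circ d=\delta\circ f_2^3$ (proved only in the appendix of \cite{Gonc4}), whereas the direct route in \cite{GandS} is independent of it.

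Two small points worth tightening in a full write-up. First, your appeal to Lemma~\ref{v-gangl} for the equality $g_D^3\circ\delta=\partial^D\circ\tau_D^3$ is at the level of $\mathcal{B}_3(F)$ rather than $\varmathbb{Z}[F^{\bullet\bullet}]$; you should note that $f_D$ descends to $\tau_D^3$ because $\rho_3^D(F)=\ker\partial^D$, so the image of $R_3(F)$ under $f_D$ lands in it. Second, in the factorisation $\tau_0^3=g_{3,D}^1\circ f_0^3$ the wedge expression produced by $g_{3,D}^1$ and the one in (\ref{t03def}) differ superficially in the last ratio (one has $\Delta(\hat x_{i+3})/\Delta(\hat x_{i+2})$, the other its inverse); the discrepancy is absorbed by $a\wedge b^{-1}=-a\wedge b$ and the cyclic re-indexing you already flagged as the main obstacle, so the identification does go through.
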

\begin{proof}

\end{proof}
So, we can combine diagrams (\ref{Gonc2d}) and (\ref{bicomp2}), again exploit Lemma \ref{v-gangl} to get the following diagram. We will show that all the faces of the following diagram is commutative. \begin{displaymath}\label{tri3D}
 \xymatrix{
&C_{6,3}\ar[r]^d\ar[dd]\ar[ld]_{f_{2}^3}&
C_{5,3}\ar[r]^d\ar[dd]\ar[ld]_{f_{1}^3}  &C_{4,3}\ar[dd]^{\tau_{0}^{3}}
\ar[dl]_{f_{0}^3}\\
\mathcal{B}_3(F)\ar[r]^{\delta}
\ar[dr]^{\tau^3_{D}} &\ar[d]^{\tau^3_{2}}\mathcal{B}_2(F)\otimes F^\times\ar[r]^{\delta}\ar[dr]^{g^2_{3,D}} &\ar[d]^{\tau_{1}^{3}} \bigwedge^3F^\times\ar[dr]^{g^1_{3,D}}\\
&\beta^D_3(F)\ar[r]^{\partial^D\qquad\qquad} & \left(\beta^D_2(F)\otimes F^\times\right)\oplus \left(F\otimes \mathcal{B}_2(F)\right)\ar[r]^{\qquad\qquad\partial^D}& F\otimes \bigwedge^2F^\times }\tag{4.2b} \\
\end{displaymath}
where (see $\S$6.1 in \cite{PandG})
\[\tau^3_D([x]_3)=\llbracket x \rrbracket_3,\quad g^2_{3,D}([ x]_2\otimes y)=\llbracket x\rrbracket_2\otimes y+ \frac{D(y)}{y}\otimes [x]_2\]
and
\[g^1_{3,D}(x\wedge y\wedge z)=\frac{D(x)}{x}\otimes y\wedge z-\frac{D(y)}{y}\otimes x\wedge z+\frac{D(z)}{z}\otimes x\wedge y\]
Note that all the morphisms here are well-defined and the well-definedness of $f_0^2,f_1^2,f_0^3,f_1^3,f_2^3$ were shown in \cite{Gonc1} while the well-definedness of $\tau_0^2,\tau_1^2,\tau_0^3,\tau_1^3,\tau_2^3$ were shown in \cite{GandS}.

\begin{cor}
The diagram \eqref{tri3D} above is commutative, i.e. there is a morphism of complexes between all three complexes used in diagram \eqref{tri3D}.
\end{cor}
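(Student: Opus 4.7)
The plan is to parallel the approach used for the weight $n=2$ corollary proved above. I would first observe that three of the six faces of diagram \eqref{tri3D} are rectangles whose commutativity has already been established in the paper: the top rectangle from Grassmannian to Goncharov (via $f_2^3,f_1^3,f_0^3$) is precisely diagram \eqref{Gonc2d}, commutative by the theorem above; the outer rectangle from Grassmannian to the variant of Cathelineau's complex (via $\tau_2^3,\tau_1^3,\tau_0^3$) is precisely diagram \eqref{bicomp2}, likewise commutative by the theorem above; and the remaining rectangle from Goncharov to Cathelineau (via $\tau^3_D,g^2_{3,D},g^1_{3,D}$) is commutative by Lemma \ref{v-gangl} applied to each of its two constituent squares (the left square being the $n=3$ case, the right square reducing to a short direct check in the spirit of the $n=2$ case). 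The proof of the corollary therefore reduces to verifying the three triangular faces sitting at the three columns of the prism, namely
\[\tau^3_D\circ f_2^3=\tau_2^3,\qquad g^2_{3,D}\circ f_1^3=\tau_1^3,\qquad g^1_{3,D}\circ f_0^3=\tau_0^3.\]

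For the first triangle the verification is essentially tautological: since $f_2^3$ outputs $\tfrac{2}{45}\mathrm{Alt}_6[r_3(x_0,\ldots,x_5)]_3$ and $\tau^3_D$ sends $[x]_3$ to $\llbracket x\rrbracket^D_3$, the composition $\tau^3_D\circ f_2^3$ immediately reproduces the defining alternation-sum expression for $\tau_2^3$. For the second triangle I would expand the rule $g^2_{3,D}([r]_2\otimes\prod\Delta)=\llbracket r\rrbracket^D_2\otimes\prod\Delta+\tfrac{D(\prod\Delta)}{\prod\Delta}\otimes[r]_2$ on each of the five summands of $f_1^3(x_0,\ldots,x_4)$; the resulting signed sum over $i=0,\ldots,4$ matches the definition of $\tau_1^3$ summand for summand, exactly mirroring the weight-$2$ computation in the earlier corollary.

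The third triangle is what I expect to be the main computational obstacle. I would begin by applying $g^1_{3,D}$ to each triple wedge $\bigwedge_{j\ne i}\Delta(x_0,\ldots,\hat{x}_j,\ldots,x_3)$ appearing in $f_0^3(x_0,\ldots,x_3)$, producing three tensor summands per value of $i$ and hence twelve terms in total. In parallel I would expand each wedge $\tfrac{\Delta(x_0,\ldots,\hat{x}_{i+1},\ldots,x_3)}{\Delta(x_0,\ldots,\hat{x}_{i+2},\ldots,x_3)}\wedge\tfrac{\Delta(x_0,\ldots,\hat{x}_{i+3},\ldots,x_3)}{\Delta(x_0,\ldots,\hat{x}_{i+2},\ldots,x_3)}$ occurring in the definition of $\tau_0^3$ by multiplicativity of $\wedge$ over $F^\times$ together with $a\wedge a=0$, again producing three summands per $i$. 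The remaining task is then to match these two collections of twelve terms by grouping them according to the logarithmic-derivative factor $D\Delta_{\hat k}/\Delta_{\hat k}$ on the left of the tensor and confirming that the alternating signs agree throughout. This is a bookkeeping exercise rather than one requiring new ideas, but it is where the bulk of the computational labor of the proof is concentrated.
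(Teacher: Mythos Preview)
Your proposal is correct and follows essentially the same approach as the paper: reduce to the three triangular faces $\tau^3_D\circ f_2^3=\tau_2^3$, $g^2_{3,D}\circ f_1^3=\tau_1^3$, $g^1_{3,D}\circ f_0^3=\tau_0^3$ (the other faces being covered by the two preceding theorems and Lemma~\ref{v-gangl}) and verify each by direct expansion. If anything, you are more explicit than the paper about the twelve-term bookkeeping needed for the third triangle, which the paper collapses into a single displayed line.
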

\begin{proof}
We only need to show that $g^1_{3,D}\circ f^3_0(x_0,\ldots,x_3)=\tau^3_0(x_0,\ldots,x_3)$, $g^2_{3,D}\circ f^3_1(x_0,\ldots,x_4)=\tau^3_1(x_0,\ldots,x_4)$ and $\tau^3_D\circ f^3_2(x_0,\ldots,x_5)=\tau^3_2(x_0,\ldots,x_5)$
\begin{align*}
g^1_{3,D}&\circ f^3_0(x_0,\ldots,x_3)\\
=&g^1_{3,D}\left(\sum_{i=0}^3(-1)^i
\bigwedge_{\substack{j=0\\j\neq i}}^3\Delta(x_0,\ldots,\hat{x}_j,\ldots,x_3)\right)\\
=&\sum^3_{i=0}(-1)^i\left(\frac{D\left(\Delta(x_0,\ldots,\hat{x}_i,\ldots,x_4)\right)}{\Delta(x_0,\ldots,\hat{x}_i,\ldots,x_4)}\otimes\frac{\Delta(x_0,\ldots,\hat{x}_{i+1},\ldots,x_4)}{\Delta(x_0,\ldots,\hat{x}_{i+2},\ldots,x_4)}\wedge\frac{\Delta(x_0,\ldots,\hat{x}_{i+2},\ldots,x_4)}{\Delta(x_0,\ldots,\hat{x}_{i+3},\ldots,x_4)}\right),\\
&\qquad\qquad\qquad\qquad\qquad\qquad\qquad
\qquad\qquad\qquad\qquad\qquad\qquad\qquad
\quad i \mod 4
\end{align*}
\begin{align*}
g^2_{3,D}&\circ f^3_1(x_0,\ldots,x_4)\\
=&g^2_{3,D}\left(\sum^4_{i=0}(-1)^i\left[r\left(x_i|x_0,\ldots,\hat{x}_i,\ldots,x_4\right)\right]_2\otimes \prod_{i\neq j}\Delta(\hat{x}_i,\hat{x}_j)\right)\\
=&\sum^4_{i=0}(-1)^i\left\llbracket r\left(x_i|x_0,\ldots,\hat{x}_i,\ldots,x_4\right)\right\rrbracket_2\otimes \prod_{i\neq j}\Delta(\hat{x}_i,\hat{x}_j)\\
+&\sum^4_{\substack{j=0\\j\neq i}}\frac{D(\hat{x}_i,\hat{x}_j)}{(\hat{x}_i,\hat{x}_j)}\otimes\left[r\left(x_i|x_0,\ldots,\hat{x}_i,\ldots,x_4\right)\right]_2
\end{align*}
\begin{align*}
\tau^3_D\circ f^3_2(x_0,\ldots,x_5)=&\tau^3_D\left(\frac{2}{45}\text{Alt}_6\left[\frac{
\Delta(x_0,x_1,x_3)\Delta(x_1,x_2,x_4)
\Delta(x_2,x_0,x_5)}{\Delta(x_0,x_1,x_4)
\Delta(x_1,x_2,x_5)\Delta(x_2,x_0,x_3)}
\right]_3\right)\\
=&\frac{2}{45}\text{Alt}_6\left\llbracket\frac{
\Delta(x_0,x_1,x_3)
\Delta(x_1,x_2,x_4)\Delta(x_2,x_0,x_5)}{\Delta(x_0,x_1,x_4)\Delta(x_1,x_2,x_5)
\Delta(x_2,x_0,x_3)}\right\rrbracket_3
\end{align*}
\end{proof}

\section*{Acknowledgement}
The author would like to thanks Herbert Gangl, Victor Abrashkan and Tony Scholl. This paper consists on one chapter of the author's doctoral thesis at University of Durham.

\end{document}